\theoremstyle{plain}
\DeclareMathAlphabet{\mathpzc}{OT1}{pzc}{m}{it}
\newtheorem{theorem}{Theorem}
\newtheorem{que}{Question}
\title{Homotopy type through homology groups}
\author{Omar Antolín Camarena, Andrés Carnero Bravo}
\date{}
\begin{document}
\maketitle
The homology groups of a space do not determine its homotopy type, but when the space is a simply-connected CW-complex, it is a well-known fact that if the space has the homology of a wedge of spheres of the same dimension, then it must have the homotopy type of that wedge 
of spheres. So we can ask if there are other cases where the homology groups of a simply connected CW-complex determine 
that the space is a wedge of spheres. In 
this brief note we show that if a complex has free finitely generated reduced homology groups for two consecutive dimensions and trivial homology 
for all other dimensions, then it must have the homotopy type of a wedge of spheres of two consecutive dimensions. We also show  
other pairs of dimensions for which the last result can be generalized. 

To fix ideas, we first give a proof of the above mentioned folklore result; the map constructed in this proof will be use in the proof of our main theorem. Throughout we will take homology with integral coefficients.
\begin{theorem}
Let $X$ be a simply connected CW-complex such that the only non-zero reduced homology group is $\tilde{H}_d(X)\cong\mathbb{Z}^a$. Then 
\[X\simeq\bigvee_a\mathbb{S}^d\]
\end{theorem}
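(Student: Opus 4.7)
The plan is to build a map from $\bigvee_{a}\mathbb{S}^d$ to $X$ that realizes an isomorphism on $H_d$, and then invoke the homology Whitehead theorem. Because $X$ is simply connected, the cases $d=0$ and $d=1$ force $a=0$ (in the first case $\tilde H_0=0$ for a path-connected space, in the second case $H_1=\pi_1^{\mathrm{ab}}=0$), so $X$ is acyclic, simply connected, and hence contractible, and there is nothing to prove. Thus I would assume $d\ge 2$.

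Next I would invoke the Hurewicz theorem: since $X$ is simply connected and $\tilde H_i(X)=0$ for $i<d$, the Hurewicz map $\pi_d(X)\to H_d(X)\cong\mathbb{Z}^a$ is an isomorphism. Choose classes $\alpha_1,\dots,\alpha_a\in\pi_d(X)$ corresponding to a free basis of $H_d(X)$, represent each $\alpha_i$ by a based map $f_i\colon \mathbb{S}^d\to X$, and assemble them into
\[
 f\colon\bigvee_{i=1}^{a}\mathbb{S}^d\longrightarrow X.
\]

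I would then check that $f$ induces an isomorphism on every homology group. For $i\ne d$ both sides vanish, and in degree $d$ the map $f_*$ sends the standard basis of $H_d\!\left(\bigvee_a\mathbb{S}^d\right)\cong\mathbb{Z}^a$ to the chosen basis of $H_d(X)$ by construction, via the identification of Hurewicz with the image of the fundamental classes of the spheres. Since both the domain and target are simply connected CW-complexes (the domain because $d\ge 2$), Whitehead's theorem in its homological form (a homology equivalence between simply connected CW-complexes is a homotopy equivalence) gives that $f$ is a homotopy equivalence.

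The substantive ingredients are Hurewicz and the homological Whitehead theorem; neither step presents a real obstacle, and the main thing to be careful about is the low-dimensional bookkeeping that reduces us to the case $d\ge 2$ where both tools apply. The explicit construction of $f$ is what will be reused in the proof of the main theorem.
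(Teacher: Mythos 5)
Your proposal is correct and follows essentially the same route as the paper: Hurewicz to identify $\pi_d(X)$ with $H_d(X)\cong\mathbb{Z}^a$, a wedge of spheres mapping in a basis, and the homological form of Whitehead's theorem. The only addition is the explicit low-dimensional bookkeeping for $d\le 1$, which the paper leaves implicit.
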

\begin{proof}
By the Hurewicz Theorem, $\pi_d(X)\cong\tilde{H}_d(X)\cong\mathbb{Z}^a$. Therefore, there are maps 
$i_j\colon\mathbb{S}^d\longrightarrow X$
for $1\leq j\leq a$ such that the combined map
\[i\colon\bigvee_a\mathbb{S}^d\longrightarrow X\]
is an isomorphism on $\pi_d$. Thus $i$ induces an isomorphism on reduced homology groups and, by Whitehead's Theorem, is a
homotopy equivalence.
\end{proof}

The case $k=1$ of the following theorem is a special case of example 4C.2 of \citep{hatcher}. For completness we give a different proof.
\begin{theorem}
Let $X$ be a simply-connected CW-complex such that
\[\tilde{H}_q(X)\cong\left\lbrace\begin{array}{cc}
   \mathbb{Z}^a  & \mbox{ for } q=d \\
    \mathbb{Z}^b & \mbox{ for } q=d+k \\
    0 & \mbox{ for } q\neq d,d+k
\end{array}\right.\]
where $a,b,d,k$ are positive integers, with $d>k$ and
$k \in \{1,5,6,13,62\}$. Then 
\[X\simeq\bigvee_{a}\mathbb{S}^d\vee\bigvee_{b}\mathbb{S}^{d+k}\]
\end{theorem}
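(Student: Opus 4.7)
The plan is to build a homotopy equivalence $\bigvee_a\mathbb{S}^d\vee\bigvee_b\mathbb{S}^{d+k}\to X$ by combining the map $i$ from the previous proof with a cofiber-sequence analysis. Apply the construction above to obtain $i\colon\bigvee_a\mathbb{S}^d\to X$ inducing an isomorphism on $\tilde{H}_d$, and let $C$ denote its homotopy cofiber. The long exact sequence of the cofiber, together with the hypothesis on $\tilde{H}_*(X)$, forces $\tilde{H}_{d+k}(C)\cong\mathbb{Z}^b$ and $\tilde{H}_n(C)=0$ otherwise; since $d\geq 2$, the space $C$ is simply-connected, and the previous theorem then identifies $C\simeq\bigvee_b\mathbb{S}^{d+k}$.

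By the minimal CW model for simply-connected spaces with free homology, $X$ is homotopy equivalent to $\bigvee_a\mathbb{S}^d\cup_\phi\bigvee_b e^{d+k}$ for some attaching map $\phi\colon\bigvee_b\mathbb{S}^{d+k-1}\to\bigvee_a\mathbb{S}^d$, and the desired wedge decomposition of $X$ holds precisely when $\phi\simeq *$. The Puppe connecting map $\delta\colon C\to\bigvee_a\mathbb{S}^{d+1}$ equals (up to sign) $\Sigma\phi$, and since the hypothesis $d>k$ places $\phi$ in the Freudenthal stable range, $\phi\simeq *$ is equivalent to $\delta\simeq *$. Thus the theorem reduces to showing $\delta\simeq *$.

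The stable range $d>k$ identifies $[\bigvee_b\mathbb{S}^{d+k},\bigvee_a\mathbb{S}^{d+1}]\cong\bigoplus_{ab}\pi_{d+k}(\mathbb{S}^{d+1})$ (the wedge and the product of spheres agree through this dimension) and each $\pi_{d+k}(\mathbb{S}^{d+1})$ with the $(k-1)$st stable stem. For $k\in\{5,6,13,62\}$ the stable stems $\pi_4^s,\pi_5^s,\pi_{12}^s,\pi_{61}^s$ all vanish by the computations of Serre, Toda, and Isaksen--Wang--Xu, so $\delta$ is automatically null. The case $k=1$ is exceptional, since $\pi_0^s\cong\mathbb{Z}$; here $\phi\colon\bigvee_b\mathbb{S}^d\to\bigvee_a\mathbb{S}^d$ is encoded by an integer matrix equal to the cellular boundary $\partial\colon C^{\mathrm{CW}}_{d+1}(X)\to C^{\mathrm{CW}}_d(X)$, and the assumption $H_{d+1}(X)\cong\mathbb{Z}^b$ forces $\ker\partial=\mathbb{Z}^b$, i.e., $\partial=0$. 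The main obstacle of the argument is the identification of the admissible $k$: for $k\geq 2$ this reduces to the question of when the stable stem $\pi_{k-1}^s$ vanishes, and the listed set $\{5,6,13,62\}$ is precisely the collection of such $k$ known within the currently computed range.
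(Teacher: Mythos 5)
Your proof is correct, but it takes a genuinely different route from the paper's. The paper never passes to a minimal CW model: instead it constructs a map $s\colon\bigvee_b\mathbb{S}^{d+k}\to X$ realizing a free summand $\mathbb{Z}^b$ of $\pi_{d+k}(X)$ and checks that $i\vee s$ is a homology isomorphism, hence an equivalence by Whitehead. To produce the splitting $\pi_{d+k}(X)\cong\mathbb{Z}^b\oplus H$ it runs, for $k=1$, a Serre spectral sequence argument over the Postnikov section $K(\mathbb{Z}^a,d)$, and for $k>1$ a Blakers--Massey comparison of the homotopy fiber and cofiber of $i$, reducing to the vanishing of $\pi_{d+k-1}(\mathbb{S}^d)\cong\pi^s_{k-1}$ --- exactly the same stable input your argument uses to kill the attaching map. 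Your route, via the minimal CW model (Hatcher's Proposition 4C.1, which the paper cites only in the acknowledgments as subsuming the $k=1$ case), isolates the single obstruction $[\phi]\in\bigoplus_{ab}\pi_{d+k-1}(\mathbb{S}^d)$ and is arguably cleaner; in particular your $k=1$ argument via the cellular chain complex is far more elementary than the paper's spectral sequence computation. Two small points to tighten: the detour through $\delta\simeq\pm\Sigma\phi$ is unnecessary, since $d>k$ already gives $[\bigvee_b\mathbb{S}^{d+k-1},\bigvee_a\mathbb{S}^d]\cong\bigoplus_{ab}\pi_{d+k-1}(\mathbb{S}^d)\cong\bigoplus_{ab}\pi^s_{k-1}$ directly; and in the $k=1$ case the claim that $\ker\partial\cong\mathbb{Z}^b$ forces $\partial=0$ deserves one more word, since a rank-$b$ subgroup of $\mathbb{Z}^b$ need not be the whole group --- here one should note that $\mathrm{im}\,\partial\cong\mathbb{Z}^b/\ker\partial$ is finite and embeds in the torsion-free group $\mathbb{Z}^a$, hence vanishes.
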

\begin{proof}
The key step of the proof is to show that $\pi_{d+k}(X)\cong\mathbb{Z}^b\oplus H$ for some $H$ so we can pick a map 
\[s\colon\bigvee_b\mathbb{S}^{d+k}\longrightarrow X\]
with $\mathrm{im}(s_*)=\mathbb{Z}^b \le \pi_{d+k}(X)$.

First, we do this for the case $k=1$ using a Serre spectral 
sequence argument. The $d$-th space $P_d(X)$ in the Postnikov tower is 
given by $K(\mathbb{Z}^a, d)$. Let $\phi_d$ be the canonical map 
$\phi_d\colon X\longrightarrow P_d(X)$. We have that $\phi_d$ is
$(d+1)$-connected and that $F:=\mathsf{hofib}(\phi_d)$ is $d$-connected. 
In the Serre spectral sequence for $F\longrightarrow X\longrightarrow K(Z^a,d)$, we have that
\[E_{p,q}^2=H_p(K(\mathbb{Z}^a,d),H_q(F))\]
Thus $E_{p,q}^2 = 0$ for any $1\leq q\leq d$ and for any $1\leq p<d$. 
Also $E_{0,d+1}^2=H_{d+1}(F)\cong\pi_{d+1}(F)\cong\pi_{d+1}(X)$
and, $E_{d+1,0}^2 = H_{d+1}(K(\mathbb{Z}^a, d)) = 0$ (the easiest way
to see this last fact is that one can build a $K(\mathbb{Z}^a, d)$ by
killing the homotopy groups of $\bigvee_a S^d$ in degrees $n+1$ and
higher and this is done by attaching cells of dimensions $n+2$ and higher).

Therefore, the second page of the sequence looks like:
\begin{equation*}
    \xymatrix{
    \pi_{d+1}(X) & E_{1,d+1}^2 & E_{2,d+1}^2 & \cdots & E_{d,d+1}^2 & E_{d+1,d+1}^2 & E_{d+2,d+1}^2\\
    0  & 0  & 0  \ar@{->}[ull]& \cdots &   0 & 0& 0\ar@{->}[ull]\\
    \vdots  & \vdots  & \vdots &  &   \vdots & \vdots& \vdots\\
    0  & 0  & 0  & \cdots &   0 & 0& 0\\
    0  & 0  & 0  \ar@{->}[ull]& \cdots &   0 & 0& 0\ar@{->}[ull]\\
    \mathbb{Z} & 0 & 0 \ar@{->}[ull]& \cdots& \mathbb{Z}^a & 0 & E_{d+2,0}^2 \ar@{->}[ull]
    }
\end{equation*}
Then $\mathbb{Z}^b\cong \tilde{H}_{d+1}(X)\cong E_{0,d+1}^\infty\cong\pi_{d+1}(X)/H$, where 
$H=im\left(d_{d+2}^{^{d+2,0}}\right)$. The short exact sequence $0 \to H 
\to \pi_{d+1}(X) \to \mathbb{Z}^a \to 0$ splits because $\mathbb{Z}^a$ is
free abelian, and so 
\[\pi_{d+1}(X)\cong\mathbb{Z}^b\oplus H.\]

For the other cases, namely $k \in \{5,6,13,62\}$, consider the following 
diagram:
\begin{equation*}
    \xymatrix{
    Q \ar@/^0.6pc/@{->}[rrd]^{g}
    \ar@/_0.6pc/@{->}[rdd] \ar@{->}[rd]_{f}& & \\
     & \displaystyle\bigvee_a\mathbb{S}^d \ar[r]_{i} \ar@{->}[d] & X \ar[d]^{q}\\
     & \ast \ar@{->}[r] & C,}
\end{equation*}
where the inner square is an homotopy push-out and the outer square is
a homotopy pullback, i.e., $Q = \mathsf{hofib}(q)$ and $C = 
\mathsf{hocofib}(i)$. Then,  by the Blakers-Massey Theorem, $f$ is
$(2d-1)$-connected. Next we will look at the long exact sequence of 
homotopy groups for the fiber sequence $Q \longrightarrow X 
\longrightarrow C$ to compute homotopy groups of $X$ up to dimension 
$d+k$. For this we will need some information about the homotopy groups 
of $Q$ and $C$.

By the van Kampen theorem, $C$ is simply-connected, and the Mayer-
Vietoris homology sequence for $C$, tells us that $C$ has reduced 
homology $\mathbb{Z}^b$ in degree $d+k$ and zero otherwise, so that $C 
\simeq \bigvee_b \mathbb{S}^{d+k}$. The long exact sequence in
homotopy then gives us $\pi_r(Q) \cong \pi_r(X)$ for $r \le d+k-2$.

As for $Q$, the Blakers-Massey theorem gave us that $\pi_r(Q) \cong 
\pi_r(\bigvee_a \mathbb{S}^d)$ for $r<2d-1$. Taking $
\bigvee_a\mathbb{S}^d$ as the $d$-skeleton of 
$\prod_a\mathbb{S}^d$, we have that the pair 
$\left(\prod_a\mathbb{S}^d,\bigvee_a\mathbb{S}^d\right)$ is 
$(2d-1)$-connected, therefore 
\[ \pi_r(Q) \cong \pi_r\left(\bigvee_a\mathbb{S}^d\right) \cong \bigoplus_a\pi_r(\mathbb{S}^d)\]
for all $r<2d-1$. Since $d>k$ we have $d+k-1<2d-1$ and therefore,
\[\pi_{d+k-1}(Q)\cong \bigoplus_a\pi_{d+k-1}(\mathbb{S}^d).\]
These groups are zero for the pairs $d$ and $k>1$ given in the theorem 
(see \citep{Toda_1963,Wang_2017}), and are in fact exactly the stable homotopy 
groups of spheres that are known to be zero.

Now we see that the long exact sequence in homotopy near degree $d+k$ is:
\[\pi_{d+k}(Q) \xrightarrow{g_\ast} \pi_{d+k}(X) \xrightarrow{q_\ast}
 \pi_{d+k}(C) \to 0 \to \pi_{d+k-1}(X) \to 0.\]
Therefore $\pi_{d+k-1}(X) = 0$ and, since $\pi_{d+k}(C)$ is free abelian, we get the desired splitting
\[\pi_{d+k}(X)\cong \mathbb{Z}^b\oplus\mathrm{im}(g_*).\]

Now in all cases we can pick a map 
\[s\colon\bigvee_b\mathbb{S}^{d+k}\longrightarrow X\]
with $\mathrm{im}(s_*)=\mathbb{Z}^b \le \pi_{d+k}(X)$, and we will show 
that the map 
\[i\vee s\colon\bigvee_a\mathbb{S}^d\vee\bigvee_b\mathbb{S}^{d+k}\longrightarrow X\]
is a homotopy equivalence. Since its source and target are simply-connected,
it is enough to show it induces and isomorphism on homology, 
which it clearly does in degree $d$.

All that is left to show is that $s$ is an isomorphism on $H_{d+k}$. In 
the case $k = 1$ this is because we had an isomorphism 
$\pi_{d+1}(X)/H \cong H_{d+1}(X)$ coming from the Serre spectral 
sequence. In the other cases, where $k>1$, the summand $\mathbb{Z}^b$ of 
$\pi_{d+k}(X)$ was $\pi_{d+k}(C)$, so that $q_\ast \circ s_\ast : 
\pi_{d+k}(\bigvee_b \mathbb{S}^{d+k}) \to \pi_{d+k}(C)$ is an 
isomorphism. By the Hurewicz theorem, $q \circ s$ is an isomorphism on 
$H_{d+k}$, and since $q$ is also an isomorphism on $H_{d+k}$ we conclude 
that $s$ is as well.
\end{proof}

The lower bound $d>k$ in the theorem can not be reduced. The product
$X=\mathbb{S}^k\times\mathbb{S}^k$ is not homotopy equivalent to
a wedge of spheres and is a counterexample to a hypothetical $d \ge k$ 
generalization of the theorem. It should be noticed that for any
CW-complex $X$ which satisfies the hypothesis of the theorem but with
$d=k$ instead of $d>k$, the suspension $\Sigma X$ does satisfy all the 
hypothesis and thus has the homotopy type of a wedge of spheres. 

\begin{que}
Are there more pairs $q$ and $k$ such that the result holds for dimensions $d$ and $d+k$ for all $d\geq q$?
\end{que}

Certainly if any other stable stem $\pi^s_m$ were $0$, then $(q,k)=
(m+2,m+1)$ would be an example of such a pair, but notice the theorem
also holds for $(q,k) = (2,1)$ even though $\pi^s_0 \cong \mathbb{Z}$ is 
not zero.

Also note that for $k=2$ there is no such $q$: consider the space
$\Sigma^n\mathbb{CP}^2$ which is simply-connected, only has nonzero
reduced homology in degrees $n$ and $n+2$ (where it is $\mathbb{Z}$),
but is not homotopy equivalent to a wedge of spheres ---because 
$\mathrm{Sq}^2$ is nonzero, for example. Similarly, suspensions of
the quaternionic projective plane show that for $k=4$ there can be
no $q$. 
The $\Sigma^n\mathbb{CP}^2$ example also shows that the theorem
can not be generalized to spaces with free abelian homology in three
or more consecutive dimensions.

\textbf{Acknowledgments.} We wish to thank Allen Hatcher for pointing out that the case $k=1$ of the main theorem is a special case of Proposition 4C.1 in his textbook \citep{hatcher}. We also wish to thank Oliver Röndigs for pointing out we could add the case $k=62$ to the main theorem.

\bibliographystyle{acm}
\bibliography{Homttrhoml}
\vspace{1cm}
\hspace{0.5cm}Omar Antolín Camarena

Instituto de Matemáticas, UNAM, Mexico City, Mexico

\textit{E-mail address:} \href{mailto:omar@matem.unam.mx}{omar@matem.unam.mx}

\vspace{1cm}
Andrés Carnero Bravo

Instituto de Matemáticas, UNAM, Mexico City, Mexico

\textit{E-mail address:} \href{mailto:acarnerobravo@gmail.com}{acarnerobravo@gmail.com}
\end{document}